\newtheorem{thm}{Theorem}[section]
\newtheorem{cor}[thm]{Corollary}
\newtheorem{prop}[thm]{Proposition}
\theoremstyle{definition}
\theoremstyle{remark}
\newtheorem{rem}[thm]{Remark}
\numberwithin{equation}{section}
\newcommand{\bb}[1]{\mathbb{#1}}
\newcommand{\cl}[1]{\mathcal{#1}}
\begin{document}

\title{An approximation theorem for nuclear operator systems}

\author{Kyung Hoon Han}
\address{Department of Mathematical Sciences, Seoul National University, San 56-1 ShinRimDong, KwanAk-Gu, Seoul
151-747, Republic of Korea} \email{kyunghoon.han@gmail.com}

\author[V.~I.~Paulsen]{Vern I.~Paulsen}
\address{Department of Mathematics, University of Houston,
Houston, TX 77204-3476, U.S.A.} \email{vern@math.uh.edu}

\subjclass[2000]{46L06, 46L07, 47L07}

\keywords{operator system, tensor product, nuclear}

\date{}

\dedicatory{}

\commby{}


\begin{abstract}
We prove that an operator system $\mathcal S$ is nuclear in the
category of operator systems if and only if there exist nets of
unital completely positive maps $\varphi_\lambda : \cl S \to
M_{n_\lambda}$ and $\psi_\lambda : M_{n_\lambda} \to \cl S$ such
that $\psi_\lambda \circ \varphi_\lambda$ converges to ${\rm
id}_{\cl S}$ in the point-norm topology. Our proof is independent
of the Choi-Effros-Kirchberg characterization of nuclear
$C^*$-algebras and yields this characterization as a corollary. We
give an explicit example of a nuclear operator system that is not
completely order isomorphic to a unital $C^*$-algebra.
\end{abstract}

\maketitle

\section{Introduction}

In summary, we prove that an operator system $\cl S$ has the
property that for every operator system $\cl T$ the {\em  minimal
operator system tensor product}  $\cl S \otimes_{\min} \cl T$
coincides with the {\em maximal operator system tensor product}
$\cl S \otimes_{\max} \cl T$ if and only if there is a point-norm
factorization of $\cl S$ through matrices of the type described in
the abstract. Our proof of this fact is quite short, direct and
independent of the corresponding factorization results of Choi,
Effros and Kirchberg for nuclear $C^*$-algebras. Our proof uses in
a key way a characterization of the maximal operator system tensor
product given in \cite{KPTT1}. We are then able to deduce the
Choi-Effros-Kirchberg characterization of nuclear $C^*$-algebras
as an immediate corollary. The proof that one obtains in this way
of the Choi-Effros-Kirchberg result combines elements of the
proofs given in \cite{CE3} and \cite{Pi} but eliminates the need
to approximate maps into the second dual or to introduce
decomposable maps. Finally, we give a fairly simple example of an
operator system that is {\em nuclear} in this sense, but is not
completely order isomorphic to any $C^*$-algebra and yet has
second dual completely order isomorphic to $B(\ell^2(\bb N)).$
Earlier, Kirchberg and Wassermann\cite{KW} constructed a nuclear
operator system that is not even embeddable in any nuclear
$C^*$-algebra.

In \cite{Ka}, Kadison characterized the unital subspaces of a real
continuous function algebra on a compact set by observing that the
norm of a real continuous function algebra is determined by the
unit and the order. As for its noncommutative counterpart, Choi
and Effros gave an abstract characterization of the unital
involutive subspaces of $\cl B(\cl H)$ \cite{CE1}. The observation
that the unit and the matrix order in $\cl B(\cl H)$ determine the
matrix norm is key to their characterization. The former is called
a real function system or a real ordered vector space with an
Archimedean order unit while the latter is termed an operator
system.

Although the abstract characterization of an operator system played a key role in the work of Choi and Effros \cite{CE1} on the tensor products of $C^*$-algebras, there had not
been much attempt to study the categorical aspects of operator
systems and their tensor theory until a series of papers
\cite{PT, PTT, KPTT1, KPTT2}.
In particular,  \cite{KPTT1} introduced axioms for tensor products of operator systems and characterized the minimal and maximal tensor products of operator systems.

The positive cone of the minimal tensor product is the largest
among all possible positive cones of operator system tensor
products while that of the maximal tensor product is the smallest.
These extend the minimal tensor product and the maximal tensor
product of $C^*$-algebras. In other words, the minimal
(respectively, maximal) operator system tensor product of two
unital $C^*$-algebras is the operator subsystem of their minimal
(respectively, maximal) $C^*$-tensor product.

For the purposes of this paper, a unital $C^*$-algebra $\cl A$
will be called {\em $C^*$-nuclear} if and only if it has the
property that for every unital $C^*$-algebra $\cl B$ the minimal
$C^*$-tensor product $\cl A \otimes_{C^*\min} \cl B$ is equal to
the maximal $C^*$-tensor product $\cl A \otimes_{C^*\max} \cl B$.
We say that a $C^*$-algebra $\cl A$ has the {\em completely
positive approximation property} (in short, CPAP) if there exists
a net of unital completely positive maps $\varphi_\lambda : \cl A
\to \cl A$ with finite rank which converges to ${\rm id}_{\cl A}$
in the point-norm topology. The Choi-Effros-Kirchberg result is
that a $C^*$-algebra $\cl A$ is $C^*$-nuclear if and only if $\cl
A$ has the CPAP if and only if there exist nets of unital
completely positive maps $\varphi_\lambda : \cl A \to
M_{n_\lambda}$ and $\psi_\lambda : M_{n_\lambda} \to \cl A$ such
that $\psi_\lambda \circ \varphi_\lambda$ converges to ${\rm
id}_{\cl A}$ in the point-norm topology \cite{CE3, Ki1}. For a
recent proof which uses operator space methods and the
decomposable approximation, we refer the reader to \cite[Chapter
12]{Pi}.

An operator system will be called {\em nuclear} provided that the
minimal tensor product of it with an arbitrary operator system
coincides with the maximal tensor product. In \cite{KPTT1}, this
property was called $(\min,\max)$-nuclear. It is natural to ask
whether the approximation theorems of nuclear $C^*$-algebras
\cite{CE3, Ki1} also hold in the category of operator systems. In
section 3, we show that an operator system $\cl S$ is nuclear if
and only if there exist nets of unital completely positive maps
$\varphi_\lambda : \cl S \to M_{n_\lambda}$ and $\psi_\lambda :
M_{n_\lambda} \to \cl S$ such that $\psi_\lambda \circ
\varphi_\lambda$ converges to ${\rm id}_{\cl S}$ in the point-norm
topology.

We then prove, independent of the Choi-Effros-Kirchberg theorem,
that a $C^*$-algebra is $C^*$-nuclear if and only if it is nuclear
as an operator system. Thus, we obtain the Choi-Effros-Kirchberg
characterization as a corollary of the factorization result for
operator systems.

In contrast, CPAP does not imply nuclearity in the category of
operator systems. Let
$$\cl S_0 = {\rm span} \{ E_{1,1}, E_{1,2}, E_{2,1}, E_{2,2}, E_{2,3},
E_{3,2}, E_{3,3} \} \subset M_3.$$ In \cite[Theorem~5.18]{KPTT1},
it is shown that this finite dimensional operator system $\cl S_0$
is not nuclear.

On the other hand, \cite[Theorem~5.16]{KPTT1} shows that the
minimal and maximal operator system tensor products of $\cl S_0
\otimes \cl B$ coincide for every unital $C^*$-algebra $\cl B.$
Thus, for operator systems, tensoring with $C^*$-algebras is not
sufficient to discern ordinary nuclearity, i.e.,
$(\min,\max)$-nuclearity. However, it is easily seen that the
minimal and maximal operator system tensor products of $\cl S
\otimes \cl B$ coincide for every unital $C^*$-algebra $\cl B$ if
and only if $\cl S$ is $(\min,{\rm c})$-nuclear, in the sense of
\cite{KPTT1}.

Finally, in section 4, we construct a nuclear operator system that
is not unitally, completely order isomorphic to a unital
$C^*$-algebra. This shows that the theory of nuclear operator
systems properly extends the theory of nuclear $C^*$-algebras. In
contrast, by \cite{CE1}, every injective operator system is
unitally, completely order isomorphic to a unital $C^*$-algebra.

\section{preliminaries}

Let $\cl S$ and $\cl T$ be operator systems. Following
\cite{KPTT1}, an {\em operator system structure} on $\cl S \otimes
\cl T$ is defined as a family of cones $M_n (\cl S \otimes_\tau
\cl T)^+$ satisfying:
\begin{enumerate}
\item[(T1)] $(\cl S \otimes \cl T, \{ M_n (\cl S \otimes_\tau \cl
T)^+ \}_{n=1}^\infty, 1_{\cl S} \otimes1_{\cl T})$ is an operator
system denoted by $\cl S \otimes_\tau \cl T$, \item[(T2)] $M_n(\cl
S)^+ \otimes M_m(\cl T)^+ \subset M_{mn} (\cl S \otimes_\tau \cl
T)^+$ for all $n,m \in \mathbb N$, and \item[(T3)] if $\varphi :
\cl S \to M_n$ and $\psi : \cl T \to M_m$ are unital completely
positive maps, then $\varphi \otimes \psi : \cl S \otimes_\tau \cl
T \to M_{mn}$ is a unital completely positive map.
\end{enumerate}
By an {\em operator system tensor product,} we mean a mapping
$\tau : \cl O \times \cl O \to \cl O$, such that for every pair of
operator systems $\cl S$ and $\cl T$, $\tau (\cl S, \cl T)$ is an
operator system structure on $\cl S \otimes \cl T$, denoted $\cl S
\otimes_\tau \cl T$. We call an operator system tensor product
$\tau$ {\em functorial,} if the following property is satisfied:
\begin{enumerate}
\item[(T4)] For any operator systems $\cl S_1, \cl S_2, \cl T_1,
\cl T_2$ and unital completely positive maps $\varphi : \cl S_1
\to \cl T_1, \psi : \cl S_2 \to \cl T_2$, the map $\varphi \otimes
\psi : \cl S_1 \otimes \cl S_2 \to \cl T_1 \otimes \cl T_2$ is
unital completely positive.
\end{enumerate}
An operator system structure is defined on two fixed operator
systems, while the functorial operator system tensor product can
be thought of as the bifunctor on the category consisting of
operator systems and unital completely positive maps.

Given an operator system $\cl R$ we let $S_n(\cl R)$ denote the set of
unital completely positive maps of $\cl R$ into $M_n$.
For operator systems $\cl S$ and $\cl T$, we put
$$M_n(\cl S \otimes_{\min} \cl T)^+ = \{ [p_{i,j}]_{i,j} \in
M_n(\cl S \otimes \cl T) : \forall \varphi \in S_k(\cl S), \psi
\in S_m(\cl T), [(\varphi \otimes \psi)(p_{i,j})]_{i,j} \in
M_{nkm}^+ \}.$$  Then the family $\{ M_n(\cl S
\otimes_{\min} \cl T)^+ \}_{n=1}^\infty$ is an operator system
structure on $\cl S \otimes \cl T.$
Moreover, if we let $\iota_{\cl S} : \cl S \to \cl B(\cl H)$
and $\iota_{\cl T} : \cl T \to \cl B(\cl K)$ be any unital completely
order isomorphic embeddings, then it is shown in \cite{KPTT1} that this is the operator system structure on
$\cl S \otimes \cl T$
 arising from the embedding
$\iota_{\cl S} \otimes \iota_{\cl T} : \cl S \otimes \cl T \to \cl
B(\cl H \otimes \cl K)$. As in \cite{KPTT1}, we call the operator
system $(\cl S \otimes \cl T, \{ M_n(\cl S \otimes_{\min} \cl T)
\}_{n=1}^\infty, 1_{\cl S} \otimes 1_{\cl T})$ the {\em minimal}
tensor product of $\cl S$ and $\cl T$ and denote it by $\cl S
\otimes_{\min} \cl T$.

The mapping $\min : \cl O \times \cl O \to \cl O$ sending $(\cl S,
\cl T)$ to $\cl S \otimes_{\min} \cl T$ is an injective,
associative, symmetric and functorial operator system tensor
product. The positive cone of the minimal tensor product is the
largest among all possible positive cones of operator system
tensor products \cite[Theorem~4.6]{KPTT1}. For $C^*$-algebras $\cl
A$ and $\cl B$, we have the completely order isomorphic inclusion
$$\cl A \otimes_{\min} \cl B \subset \cl A
\otimes_{\rm C^*\min} \cl B$$ \cite[Corollary~4.10]{KPTT1}.

For operator systems $\cl S$ and $\cl T$, we put
$$D_n^{\max}(\cl S, \cl T) = \{ \alpha(P \otimes Q)
\alpha^* : P \in M_k(\cl S)^+, Q \in M_l(\cl T)^+, \alpha \in
M_{n,kl},\ k,l \in \mathbb N \}.$$ Then it is a matrix ordering on
$\cl S \otimes \cl T$ with order unit $1_{\cl S} \otimes 1_{\cl
T}$. Let $\{ M_n(\cl S \otimes_{\max} \cl T)^+ \}_{n=1}^\infty$ be
the Archimedeanization of the matrix ordering $\{ D_n^{\max}(\cl
S, \cl T) \}_{n=1}^\infty$. Then it can be written as
$$M_n(\cl S \otimes_{\max} \cl T)^+ = \{ X \in M_n(\cl S
\otimes \cl T) : \forall \varepsilon>0, X+\varepsilon I_n \otimes
1_{\cl S} \otimes 1_{\cl T} \in D_n^{\max}(\cl S, \cl T) \}.$$ We
call the operator system $(\cl S \otimes \cl T, \{ M_n(\cl S
\otimes_{\max} \cl T)^+ \}_{n=1}^\infty, 1_{\cl S} \otimes 1_{\cl
T})$ the {\em maximal} operator system tensor product of $\cl S$
and $\cl T$ and denote it by $\cl S \otimes_{\max} \cl T$.

The mapping $\max : \cl O \times \cl O \to \cl O$ sending $(\cl S,
\cl T)$ to $\cl S \otimes_{\max} \cl T$ is an associative,
symmetric and functorial operator system tensor product. The
positive cone of the maximal tensor product is the smallest among
all possible positive cones of operator system tensor products
\cite[Theorem~5.5]{KPTT1}. For $C^*$-algebras $\cl A$ and $\cl B$,
we have the completely order isomorphic inclusion
$$\cl A \otimes_{\max} \cl B \subset \cl A
\otimes_{\rm C^*\max} \cl B$$ \cite[Theorem~5.12]{KPTT1}.

\section{An approximation theorem for nuclear operator systems}

We prove the main theorem of this paper which generalizes the
Choi-Effros-Kirchberg approximation theorem. The proof is quite
simple compared to the original one. In particular, the proof does
not depend on the Kaplansky density theorem.

\begin{thm}\label{main1}
Suppose that $\Phi : \cl S \to \cl T$ is a unital completely
positive map for operator systems $\cl S$ and $\cl T$. The
following are equivalent:
\begin{enumerate}
\item[(i)] the map $${\rm id}_{\cl R} \otimes \Phi : \cl R
\otimes_{\min} \cl S \to \cl R \otimes_{\max} \cl T$$ is
completely positive for any operator system $\cl R$; \item[(ii)]
the map $${\rm id}_E \otimes \Phi : E \otimes_{\min} \cl S \to E
\otimes_{\max} \cl T$$ is completely positive for any finite
dimensional operator system $E$; \item[(iii)] there exist nets of
unital completely positive maps $\varphi_\lambda : \cl S \to
M_{n_\lambda}$ and $\psi_\lambda : M_{n_\lambda} \to \cl T$ such
that $\psi_\lambda \circ \varphi_\lambda$ converges to the map
$\Phi$ in the point-norm topology.
$$\xymatrix{\cl S \ar[rr]^\Phi \ar[dr]_{\varphi_\lambda} && \cl T \\
& M_{n_\lambda} \ar[ur]_{\psi_\lambda} &}$$
\end{enumerate}
\end{thm}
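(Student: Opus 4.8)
The plan is to prove the cyclic implications $(i)\Rightarrow(ii)\Rightarrow(iii)\Rightarrow(i)$. The implication $(i)\Rightarrow(ii)$ is immediate, since finite dimensional operator systems are a special case of arbitrary operator systems $\cl R$. So the real work lies in $(ii)\Rightarrow(iii)$ and $(iii)\Rightarrow(i)$.

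For $(iii)\Rightarrow(i)$, suppose we have the nets $\varphi_\lambda:\cl S\to M_{n_\lambda}$ and $\psi_\lambda:M_{n_\lambda}\to\cl T$ with $\psi_\lambda\circ\varphi_\lambda\to\Phi$ in point-norm. Fix an operator system $\cl R$. The map ${\rm id}_{\cl R}\otimes\varphi_\lambda:\cl R\otimes_{\min}\cl S\to\cl R\otimes_{\min}M_{n_\lambda}$ is completely positive by functoriality of $\min$. Now the crucial point is that $M_{n_\lambda}$ is a matrix algebra, so $\cl R\otimes_{\min}M_{n_\lambda}=\cl R\otimes_{\max}M_{n_\lambda}$; this is the statement that matrix algebras are nuclear in the operator system category, which follows from the characterization of $\max$ recalled in the preliminaries (indeed $M_n(\cl R\otimes_{\max}M_k)^+=M_{nk}(\cl R)^+$ under the natural shuffle, matching $\min$). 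Then ${\rm id}_{\cl R}\otimes\psi_\lambda:\cl R\otimes_{\max}M_{n_\lambda}\to\cl R\otimes_{\max}\cl T$ is completely positive by functoriality of $\max$. Composing, ${\rm id}_{\cl R}\otimes(\psi_\lambda\circ\varphi_\lambda):\cl R\otimes_{\min}\cl S\to\cl R\otimes_{\max}\cl T$ is completely positive for every $\lambda$. Finally, positivity is preserved under point-norm limits in the (complete, closed) cone of $\cl R\otimes_{\max}\cl T$, so passing to the limit gives complete positivity of ${\rm id}_{\cl R}\otimes\Phi$. One small technical care: the convergence $\psi_\lambda\circ\varphi_\lambda\to\Phi$ pointwise on $\cl S$ must be bootstrapped to convergence of ${\rm id}_{M_m}\otimes(\psi_\lambda\circ\varphi_\lambda)$ on simple tensors and then, using a uniform completely bounded norm bound (these are unital completely positive, hence completely contractive), to all of $M_m(\cl R\otimes\cl S)$; since the positive cones are closed this suffices.

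The main obstacle is $(ii)\Rightarrow(iii)$. Here I would start from a finite dimensional operator subsystem $E\subseteq\cl S$ (or rather work with finite dimensional $E$ paired against $\cl S$) and the hypothesis that ${\rm id}_E\otimes\Phi: E\otimes_{\min}\cl S\to E\otimes_{\max}\cl T$ is completely positive. The idea is to choose $E$ cleverly: take a finite set $x_1,\dots,x_k\in\cl S$ and $\varepsilon>0$, and we want a factorization $\psi\circ\varphi$ that is within $\varepsilon$ of $\Phi$ on the $x_i$. The natural candidate for $E$ is (a finite dimensional piece of) the dual operator system $\cl S^d$, or a finite dimensional operator system built from the $x_i$. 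Using the canonical element in $E\otimes\cl S$ — morally "$\sum e_i\otimes x_i$" realizing the identity-like map $E^d\to\cl S$ — one feeds it through ${\rm id}_E\otimes\Phi$ into $E\otimes_{\max}\cl T$, and then invokes the characterization of $\max$ from \cite{KPTT1}: a positive element of $E\otimes_{\max}\cl T$ is (up to $\varepsilon$) of the form $\alpha(P\otimes Q)\alpha^*$ with $P\in M_p(E)^+$, $Q\in M_q(\cl T)^+$, $\alpha$ a scalar matrix. Reading $P$ as a completely positive map into $M_p$ and $Q$ as a completely positive map out of $M_q$ (together with the shuffling $\alpha$), this decomposition is exactly the data of the desired factorization $\varphi:\cl S\to M_{n}$ and $\psi:M_n\to\cl T$, after normalizing to make both maps unital (absorbing scalars and using the Archimedean order unit to perturb). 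The bookkeeping to extract unital completely positive $\varphi,\psi$ from the $\alpha(P\otimes Q)\alpha^*$ representation, control the error, and handle the passage from the finite set $\{x_i\}$ back to a net is the delicate part; this is where the concrete description of $D_n^{\max}$ is used in an essential way and replaces the second-dual and decomposable-map arguments of the classical proof.

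Once Theorem \ref{main1} is in hand, the abstract characterization of nuclear operator systems is the special case $\cl S=\cl T$, $\Phi={\rm id}_{\cl S}$: indeed $\cl S$ is nuclear, i.e.\ $\cl R\otimes_{\min}\cl S=\cl R\otimes_{\max}\cl S$ for all $\cl R$, precisely when ${\rm id}_{\cl R}\otimes{\rm id}_{\cl S}$ is a complete order isomorphism for all $\cl R$, which by $(i)\Leftrightarrow(iii)$ is equivalent to the existence of the asserted unital completely positive factorizations through matrix algebras converging to ${\rm id}_{\cl S}$ in point-norm.
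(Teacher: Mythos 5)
Your proposal follows essentially the same route as the paper: the same functoriality-plus-$M_n$-nuclearity argument for (iii)$\Rightarrow$(i) with passage to the limit in the closed cone, and for (ii)$\Rightarrow$(iii) the same device of applying (ii) to the dual $E^*$ of a finite-dimensional subsystem $E\subseteq\cl S$, pushing the canonical positive element of $E^*\otimes_{\min}\cl S$ representing the inclusion through ${\rm id}\otimes\Phi$, and reading the resulting $D^{\max}$ decomposition $\alpha(f\otimes Q)\alpha^*$ as the desired factorization. The ``delicate bookkeeping'' you defer is exactly what the paper executes: conjugating $f$ by $f(1_{\cl S})^{-1/2}$ on the support projection to make it unital, extending to all of $\cl S$ by Arveson's extension theorem, and perturbing $\psi'_\lambda$ by a state to restore unitality --- so your outline is correct and matches the published argument.
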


\begin{proof}
Clearly, (i) implies (ii).

$\rm (iii)\Rightarrow (i).$  For any operator system $\cl R$ and any
$n \in \bb N,$ if we identify $M_k(M_n \otimes \cl R) = M_{nk} \otimes
\cl R$ in the usual manner, then a somewhat tedious calculation shows
that $D_k^{\max}(M_n, \cl R) = M_{nk}(\cl R)^+ = M_k(M_n
\otimes_{\min} \cl R)^+.$ This gives an independent verification that $\cl R \otimes_{\max} M_n = \cl R \otimes_{\min} M_n,$ i.e., that
the two operator system structures are identical. Alternatively, this
fact follows from  \cite[Corollary~6.8]{KPTT1}, which they point out
is obtained independently of the Choi-Effros-Kirchberg theorem.  From the maps
$$\xymatrix{\cl R \otimes_{\min} \cl S \ar[rr]^-{{\rm id}_{\cl R} \otimes \varphi_\lambda}
 && \cl R \otimes_{\min} M_{n_\lambda} = \cl R \otimes_{\max} M_{n_\lambda}
 \ar[rr]^-{{\rm id}_{\cl R} \otimes \psi_\lambda} && \cl R \otimes_{\max} \cl T,}$$
we see that the map $${\rm id}_{\cl R} \otimes \psi_\lambda \circ
\varphi_\lambda : \cl R \otimes_{\min} \cl S \to \cl R
\otimes_{\max} \cl T$$ is completely positive for any operator
system $\cl R$. Since $\|\cdot \|_{\cl R \otimes_{\max} \cl T}$ is
a cross norm, ${\rm id}_{\cl R} \otimes (\psi_\lambda \circ
\phi_\lambda) (z)$ converges to ${\rm id}_{\cl R} \otimes \Phi(z)$
for each $z \in \cl R \otimes \cl S$. It follows that $z \in (\cl
R \otimes_{\min} \cl S)^+$ implies ${\rm id}_{\cl R} \otimes \Phi
(z) \in (\cl R \otimes_{\max} \cl T)^+$.

\vskip 1pc

$\rm (ii)\Rightarrow (iii).$ Let $E$ be a finite dimensional
operator subsystem of $\cl S$. There exists a state $\omega_1$ on
$E$ which plays a role of the non-canonical Archimedean order unit
on the dual space $E^*$ \cite[Corollary~4.5]{CE1}. In other words,
$(E^*, \omega_1$) is an operator system. We can regard the
inclusion $\iota : E \subset \cl S$ as an element in $(E^*
\otimes_{\min} \cl S)^+$ \cite[Lemma~8.4]{KPTT2}. The restriction
$\Phi |_E : E \to \cl T$ can be identified with the element $({\rm
id}_{E^*} \otimes \Phi) (\iota)$. By assumption, it belongs to
$(E^* \otimes_{\max} \cl T)^+$. We consider the directed set
$$\Omega = \{(E, \varepsilon) : \text{$E$ is a finite dimensional
operator subsystem of $\cl S$}, \varepsilon >0 \}$$ with the
standard partial order. Let $\lambda = (E, \varepsilon)$. For any
$\varepsilon >0$, the restriction $\Phi|_E$ can be written as
$$\Phi|_E + \varepsilon \omega_1 \otimes 1_{\cl T} = \alpha f
\otimes Q \alpha^*$$ for $\alpha \in M_{1,n_{\lambda}m}, f \in
M_{n_\lambda}(E^*)^+$ and $Q \in M_m(\cl T)^+$. The map $f : E \to
M_{n_\lambda}$ is completely positive and the matrix $f(1_{\cl
S})$ is positive semi-definite. Let $P$ be the support projection
of $f(1_{\cl S})$. For $x \in \cl S^+$, we have $$0 \le f(x) \le
\|x\|f(1_{\cl S}) \le \|x\| \|f(1_{\cl S})\| P.$$ Since every
element in $\cl S$ can be written as a linear combination of
positive elements in $\cl S$, the range of $f$ is contained in $P
M_{n_\lambda} P$. The positive semi-definite matrix $f(1_{\cl S})$
is invertible in $P M_{n_\lambda} P$. We denote by $f(1_{\cl
S})^{-1}$ its inverse in $P M_{n_\lambda} P$. Put $p = {\rm rank}
P$ and let $U^* P U = I_{p} \oplus 0$ be the diagonalization of
$P$. Since we can write
$$\begin{aligned} & \alpha f \otimes Q \alpha^* \\ = & \alpha
(f(1_{\cl S})^{1 \over 2} U
\begin{pmatrix} I_p \\ 0 \end{pmatrix} \otimes I_m)
\cdot [\begin{pmatrix} I_p & 0 \end{pmatrix} U^* f(1_{\cl S})^{-{1
\over 2}}\ f\ f(1_{\cl S})^{-{1 \over 2}} U
\begin{pmatrix} I_p \\ 0
\end{pmatrix} \otimes Q] \ \\ & \cdot (f(1_{\cl S})^{1 \over 2} U
\begin{pmatrix} I_{ p} \\ 0 \end{pmatrix} \otimes I_m)^* \alpha^*,
\end{aligned}$$
we may assume that $f : E \to M_{n_\lambda}$ is a unital
completely positive map. By the Arveson extension theorem, $f : E
\to M_{n_\lambda}$ extends to a unital completely positive map
$\varphi_\lambda : \cl S \to M_{n_\lambda}$. We define a
completely positive map $\psi'_\lambda : M_{n_\lambda} \to \cl T$
by $$\psi'_\lambda(A) = \alpha A \otimes Q \alpha^*, \qquad A \in
M_{n_\lambda}.$$ For $x \in E$, we have
$$\|\Phi(x)-\psi'_\lambda \circ \varphi_\lambda (x) \| = \|\Phi(x)-\alpha
f(x) \otimes Q \alpha^*\| = \varepsilon \| \omega_1(x) 1_{\cl T}
\| \le \varepsilon \|x\|.$$ Hence, we can take nets of unital
completely positive maps $\varphi_\lambda : \cl S \to
M_{n_\lambda}$ and completely positive maps $\psi'_\lambda :
M_{n_\lambda} \to \cl T$ such that $\psi'_\lambda \circ
\varphi_\lambda$ converges to the map $\Phi$ in the point-norm
topology. Since each $\varphi_\lambda$ is unital, $\psi'_\lambda
(I_{n_\lambda})$ converges to $1_{\cl T}$. Let us choose a state
$\omega_\lambda$ on $M_{n_\lambda}$ and set
$$\psi_\lambda(A) = {1 \over \| \psi'_\lambda\|} \psi'_\lambda(A) +
\omega_\lambda(A) (1_{\cl T} - {1 \over \| \psi'_\lambda \|}
\psi'_\lambda (I_{n_{\lambda}})).$$ Then $\psi_\lambda :
M_{n_\lambda} \to \cl T$ is a unital completely positive map such
that $\psi_\lambda \circ \varphi_\lambda$ converges to the map
$\Phi$ in the point-norm topology.
\end{proof}

Putting $\cl S = \cl T$ and $\Phi = {\rm id}_{\cl S}$, we obtain
the following corollary.

\begin{cor}\label{main2}
Let $\cl S$ be an operator system. The following are equivalent:
\begin{enumerate}
\item[(i)] $\cl S$ is nuclear; \item[(ii)] we have $$E
\otimes_{\min} \cl S = E \otimes_{\max} \cl S$$ for any finite
dimensional operator system $E$; \item[(iii)] there exist nets of
unital completely positive maps $\varphi_\lambda : \cl S \to
M_{n_\lambda}$ and $\psi_\lambda : M_{n_\lambda} \to \cl S$ such
that $\psi_\lambda \circ \varphi_\lambda$ converges to ${\rm
id}_{\cl S}$ in the point-norm topology.
\end{enumerate}
\end{cor}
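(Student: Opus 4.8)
The plan is to obtain this corollary as an immediate specialization of Theorem~\ref{main1}, taking $\cl T = \cl S$ and $\Phi = {\rm id}_{\cl S}$, which is of course a unital completely positive map. Under this substitution condition (iii) of the corollary is literally condition (iii) of the theorem, so all that remains is to reconcile conditions (i) and (ii) with their counterparts in the theorem.

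First I would record the elementary fact, valid for any operator systems $\cl R$ and $\cl S$, that the canonical map ${\rm id}_{\cl R} \otimes {\rm id}_{\cl S} : \cl R \otimes_{\max} \cl S \to \cl R \otimes_{\min} \cl S$ is always unital completely positive, since, as recalled in Section~2, the maximal tensor cone is the smallest and the minimal tensor cone the largest among all operator system tensor cones. Consequently the identity in the reverse direction, ${\rm id}_{\cl R} \otimes {\rm id}_{\cl S} : \cl R \otimes_{\min} \cl S \to \cl R \otimes_{\max} \cl S$, is completely positive if and only if $M_n(\cl R \otimes_{\min} \cl S)^+ = M_n(\cl R \otimes_{\max} \cl S)^+$ for every $n$, that is, if and only if the two operator system structures on $\cl R \otimes \cl S$ coincide. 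Applying this with a finite dimensional $E$ in place of $\cl R$ identifies condition (ii) of Theorem~\ref{main1} (for $\cl T = \cl S$, $\Phi = {\rm id}_{\cl S}$) with condition (ii) of the corollary; applying it with an arbitrary operator system $\cl R$, and using the symmetry of $\min$ and $\max$, identifies condition (i) of Theorem~\ref{main1} in the same case with the definition of $\cl S$ being nuclear, i.e. condition (i) of the corollary.

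With these identifications the three conditions of the corollary are exactly the three conditions of Theorem~\ref{main1}, so the asserted equivalences follow at once. There is no genuine obstacle here; the only point deserving a moment's attention is the equivalence between ``the identity map from $\min$ to $\max$ is completely positive'' and ``the two tensor products agree,'' which rests solely on the extremal position of the $\min$ and $\max$ cones established in \cite{KPTT1}.
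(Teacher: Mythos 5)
Your proposal is correct and is exactly the paper's approach: the authors obtain the corollary by the one-line observation that one should put $\cl T = \cl S$ and $\Phi = {\rm id}_{\cl S}$ in Theorem~\ref{main1}. The only detail you spell out that the paper leaves implicit --- that complete positivity of the identity from $\min$ to $\max$ is equivalent to equality of the two tensor products, because the reverse identity is always completely positive by the extremality of the $\min$ and $\max$ cones --- is correct and worth recording.
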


\begin{cor}[Choi-Effros-Kirchberg Theorem] Let $\cl A$ be a unital $C^*$-algebra.
Then $\cl A$ is $C^*$-nuclear if and only if there exist nets of
unital completely positive maps $\varphi_\lambda : \cl A \to
M_{n_\lambda}$ and $\psi_\lambda : M_{n_\lambda} \to \cl A$ such
that $\psi_\lambda \circ \varphi_\lambda$ converges to ${\rm
id}_{\cl A}$ in the point-norm topology.
\end{cor}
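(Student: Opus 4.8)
The plan is to read this off from Corollary \ref{main2} once the following link is in place: a unital $C^*$-algebra $\cl A$ is $C^*$-nuclear if and only if it is nuclear as an operator system. Granting that, Corollary \ref{main2} (i)$\Leftrightarrow$(iii) is precisely the asserted unital completely positive factorization of ${\rm id}_{\cl A}$ through matrix algebras, so the entire task reduces to proving this equivalence.

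First I would dispatch the easy direction. If $\cl A$ is nuclear as an operator system, then for every unital $C^*$-algebra $\cl B$ one has $\cl A \otimes_{\min} \cl B = \cl A \otimes_{\max} \cl B$ as operator systems, so the minimal and maximal operator system tensor norms agree on the algebraic tensor product $\cl A \otimes \cl B$. By the completely order isomorphic inclusions $\cl A \otimes_{\min} \cl B \subseteq \cl A \otimes_{\rm C^*\min} \cl B$ and $\cl A \otimes_{\max} \cl B \subseteq \cl A \otimes_{\rm C^*\max} \cl B$ recorded in Section 2, these two operator system norms are exactly the restrictions of the $\rm C^*$-minimal and $\rm C^*$-maximal norms; hence the two $\rm C^*$-norms coincide on the dense $*$-subalgebra $\cl A \otimes \cl B$ and, passing to completions, $\cl A \otimes_{\rm C^*\min} \cl B = \cl A \otimes_{\rm C^*\max} \cl B$. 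Thus $\cl A$ is $C^*$-nuclear. Running this comparison in reverse shows that if $\cl A$ is $C^*$-nuclear, then $\cl A \otimes_{\min} \cl B = \cl A \otimes_{\max} \cl B$ for every unital $C^*$-algebra $\cl B$; equivalently, as noted in the introduction, $\cl A$ is $(\min,{\rm c})$-nuclear.

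The remaining step is to upgrade $(\min,{\rm c})$-nuclearity of $\cl A$ to nuclearity; by Corollary \ref{main2} this amounts to verifying $E \otimes_{\min} \cl A = E \otimes_{\max} \cl A$ for every finite dimensional operator system $E$. Here the hypothesis that $\cl A$ is a $C^*$-algebra must genuinely be used, since the example $\cl S_0$ shows that the implication $(\min,{\rm c})$-nuclear $\Rightarrow$ $(\min,\max)$-nuclear fails for general operator systems. The clean route is through the commuting tensor product of \cite{KPTT1}: because one factor, $\cl A$, is a unital $C^*$-algebra, one has $\cl A \otimes_{\rm c} \cl T = \cl A \otimes_{\max} \cl T$ for every operator system $\cl T$, so for $\cl A$ the notions $(\min,{\rm c})$-nuclear and $(\min,\max)$-nuclear literally coincide, and we are done. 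If one prefers to avoid ${\rm c}$, the alternative is to fix a unital complete order embedding $E \hookrightarrow \cl C$ into a unital $C^*$-algebra $\cl C$ (for instance $\cl C = C^*_u(E)$), use injectivity of $\otimes_{\min}$ to realize $E \otimes_{\min} \cl A$ as an operator subsystem of $\cl C \otimes_{\min} \cl A = \cl C \otimes_{\max} \cl A$ (the last equality by the previous paragraph applied to the $C^*$-algebra $\cl C$), and then push positivity back down into $E \otimes_{\max} \cl A$.

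I expect this push-down to be the main obstacle. The maximal operator system tensor product is not injective, so the maximal structure on $\cl C \otimes \cl A$ restricted to $E \otimes \cl A$ need not be the maximal structure of $E \otimes_{\max} \cl A$; making the argument go through requires either the Stinespring/commutant-type dilation underlying $\cl A \otimes_{\rm c} \cl T = \cl A \otimes_{\max} \cl T$ for a $C^*$-algebra $\cl A$ (which is where the $C^*$-structure really enters), or a careful enough choice of the $C^*$-cover $\cl C$ permitting the generators $\alpha(P \otimes Q)\alpha^*$ of $D_n^{\max}(\cl C, \cl A)$ to be rewritten with $P$ taken with entries in $E$. Once the equivalence ``$C^*$-nuclear $\Leftrightarrow$ nuclear operator system'' is secured, everything else — the comparison of $\rm C^*$-norms above and the concluding appeal to Corollary \ref{main2} — is formal.
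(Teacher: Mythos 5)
Your proposal is correct and follows essentially the same route as the paper: reduce everything to the equivalence ``$C^*$-nuclear $\Leftrightarrow$ nuclear as an operator system'' and then invoke Corollary~\ref{main2}, with the hard direction handled exactly as in the paper by passing through $C^*_u(\cl T)$ and the commuting tensor product, i.e.\ the complete order embeddings $\cl A \otimes_{\min} \cl T \subseteq \cl A \otimes_{C^*\min} C^*_u(\cl T)$ and $\cl A \otimes_{\rm c} \cl T \subseteq \cl A \otimes_{C^*\max} C^*_u(\cl T)$ together with $\cl A \otimes_{\rm c} \cl T = \cl A \otimes_{\max} \cl T$ for a $C^*$-algebra $\cl A$ (KPTT1, Theorems~6.4 and~6.7). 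The difficulty you flag about non-injectivity of $\otimes_{\max}$ concerns only your alternative route, which neither you nor the paper ultimately needs.
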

\begin{proof}
It will be enough to prove that if $\cl A$ is $C^*$-nuclear, then
for every operator system $\cl T,$ the minimal and maximal
operator system tensor products coincide on $\cl A \otimes \cl T.$
Again this fact follows from \cite[Corollary~6.8]{KPTT1} which is
independent of the Choi-Effros-Kirchberg theorem.

Since the notation is somewhat different in \cite{KPTT1} and their
result relies on several earlier results, we repeat the argument
below.

Let $C^*_u(\cl T)$ be the universal $C^*$-algebra generated by the
operator system $\cl T$ as defined in \cite{KPTT1}. Since $\cl A$
is $C^*$-nuclear, we have that $\cl A \otimes_{C^*\min} C^*_u(\cl
T) = \cl A \otimes_{C^*\max} C^*_u(\cl T).$ But we have that $\cl
A \otimes_{\min} \cl T \subseteq \cl A \otimes_{C^*\min} C^*_u(\cl
T)$ completely order isomorphically, by
\cite[Corollary~4.10]{KPTT1}. Also, by \cite[Theorem~6.4]{KPTT1}
the inclusion of the {\em commuting} tensor product $\cl A
\otimes_{\rm c} \cl T \subseteq \cl A \otimes_{C^*\max} C^*_u(\cl
T)$ is a complete order isomorphism.

Thus, the fact that $\cl A$ is $C^*$-nuclear implies that $\cl A
\otimes_{\min} \cl T = \cl A \otimes_{\rm c} \cl T$ completely
order isomorphically. Finally, the result follows from the fact
\cite[Theorem~6.7]{KPTT1}, that for any $C^*$-algebra $\cl A$,
$\cl A \otimes_{\rm c} \cl T = \cl A \otimes_{\max} \cl T,$
completely order isomorphically.
\end{proof}

\begin{rem} Suppose that we call an operator system $\cl S$ {\it $C^*$-nuclear} if $\cl S \otimes_{\min} \cl B = \cl S \otimes_{\max} \cl B$ for every unital $C^*$-algebra $\cl B.$
Then it follows by \cite[Theorem~6.4]{KPTT1}, that an operator
system $\cl S$ is $C^*$-nuclear if and only if $\cl S
\otimes_{\min} \cl T = \cl S \otimes_{\rm c} \cl T$ for every
operator system $\cl T$. In the terminology of \cite{KPTT1}, this
latter property is the definition of $(\min,{\rm c})$-nuclearity.
Thus, an operator system is $C^*$-nuclear if and only if it is
$(\min,{\rm c})$-nuclear. A complete characterization of such
operator systems is still unknown.
\end{rem}

By a result of Choi and Effros \cite{CE2}, a $C^*$-algebra $\cl A$
is nuclear if and only if its enveloping von Neumann algebra ${\cl
A}^{**}$ is injective. We wish to extend this result to nuclear
operator systems. In the next section we produce an example of a
nuclear operator system that is not completely order isomorphic to
any $C^*$-algebra.

An operator space $X$ is called {\it nuclear} provided that there
exist nets of complete contractions $\varphi_{\lambda}:X \to
M_{n_{\lambda}}$ and $\psi_{\lambda}: M_{n_{\lambda}} \to X$ such
that $\psi_\lambda \circ \varphi_\lambda$ converges to ${\rm
id}_X$ in the point-norm topology. Kirchberg \cite{Ki2} gives an
example of an operator space $X$ that is not nuclear, but such
that the bidual $X^{**}$ is completely isometric to an injective
von~Neumann algebra. A later theorem of Effros, Ozawa and
Ruan\cite[Theorem~4.5]{EOR} implies that Kirchberg's operator
space $X$ is also not locally reflexive. See \cite{ER} for further
details on local reflexivity.

These pathologies do not occur for operator systems. This follows
from the works of Kirchberg \cite{Ki2} and of Effros, Ozawa and
Ruan \cite{EOR}. The following summarizes their results.

\begin{thm}\label{main3}
Let $\cl S$ be an operator system. Then the following are
  equivalent:
\begin{enumerate}
\item[(i)] $\cl S$ is a nuclear operator system; \item[(ii)] $\cl
S$ is a nuclear operator space; \item[(iii)] $\cl S^{**}$ is
unitally completely order isomorphic to an injective
  von~Neumann algebra.
\end{enumerate}
\end{thm}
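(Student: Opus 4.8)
The plan is to prove the cycle (i) $\Rightarrow$ (ii) $\Rightarrow$ (iii) $\Rightarrow$ (i). The first implication is immediate from Corollary~\ref{main2}: if $\cl S$ is a nuclear operator system, then the unital completely positive maps $\varphi_\lambda : \cl S \to M_{n_\lambda}$ and $\psi_\lambda : M_{n_\lambda} \to \cl S$ with $\psi_\lambda \circ \varphi_\lambda \to {\rm id}_{\cl S}$ in the point-norm topology are in particular complete contractions, so the same nets witness that $\cl S$ is a nuclear operator space.

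For (ii) $\Rightarrow$ (iii) I would first invoke the operator space theory: by the work of Effros, Ozawa and Ruan~\cite{EOR} (see also Kirchberg~\cite{Ki2}) a nuclear operator space has injective bidual, so $\cl S^{**}$ is injective as an operator space. Now $\cl S^{**}$ is at the same time an operator system, with Archimedean order unit $1_{\cl S}$, and I claim that it is injective as an operator system. Indeed, if $\cl R \subseteq \cl R'$ are operator systems and $\phi : \cl R \to \cl S^{**}$ is unital completely positive, then $\phi$ is a complete contraction, so injectivity of $\cl S^{**}$ as an operator space extends $\phi$ to a complete contraction $\tilde\phi : \cl R' \to \cl S^{**}$; since $\cl R$ and $\cl R'$ have the same unit, $\tilde\phi(1_{\cl R'}) = \phi(1_{\cl R}) = 1_{\cl S}$, so $\tilde\phi$ is unital, and a unital complete contraction between operator systems is completely positive. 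Hence $\cl S^{**}$ is an injective operator system, so by Choi and Effros~\cite{CE1} it is unitally completely order isomorphic to an injective unital $C^*$-algebra; such an isomorphism is a complete isometry, so this $C^*$-algebra is isometric to the dual Banach space $\cl S^{**} = (\cl S^*)^*$ and therefore, by Sakai's theorem, is a von Neumann algebra, necessarily injective.

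For (iii) $\Rightarrow$ (i), which is the implication carrying the real content, let $\cl M$ be an injective von Neumann algebra unitally completely order isomorphic to $\cl S^{**}$, so that $\cl S$ sits as a weak$^*$-dense operator subsystem of $\cl M$ through the canonical embedding $\iota : \cl S \hookrightarrow \cl S^{**}$. Since $\cl M$ is injective it is semidiscrete, so there are nets of normal unital completely positive maps $\cl M \to M_{n_\lambda} \to \cl M$ whose composition tends to ${\rm id}_{\cl M}$ in the point-ultraweak topology; restricting the first leg to $\cl S$ gives unital completely positive maps $\cl S \to M_{n_\lambda}$ and $M_{n_\lambda} \to \cl M = \cl S^{**}$ whose composition tends to $\iota$ point-ultraweakly. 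The next step is to push the range of the second leg down into $\cl S$: because each $M_{n_\lambda}$ is finite dimensional, the local reflexivity of the operator system $\cl S$ (available here by the work cited above) lets us approximate each second leg in the point-weak$^*$ topology by unital completely positive maps $M_{n_\lambda} \to \cl S$, and a careful reindexing produces unital completely positive maps $\cl S \to M_{n_\lambda} \to \cl S$ whose composition converges to ${\rm id}_{\cl S}$ pointwise in the weak topology $\sigma(\cl S, \cl S^*)$. Finally, passing to convex combinations --- which is legitimate once the convergence is weak rather than merely weak$^*$, by Mazur's theorem, and which only enlarges the matrix algebras, since a finite direct sum of matrix algebras sits unitally inside a single full matrix algebra with a unital completely positive conditional expectation back onto it --- produces nets of unital completely positive maps $\varphi_\lambda : \cl S \to M_{n_\lambda}$ and $\psi_\lambda : M_{n_\lambda} \to \cl S$ with $\psi_\lambda \circ \varphi_\lambda \to {\rm id}_{\cl S}$ in the point-norm topology. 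By Corollary~\ref{main2}, $\cl S$ is nuclear.

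The heart of the matter is this last step. Semidiscreteness of $\cl S^{**}$ only yields approximants whose range lies in the bidual, and the decisive point --- the one that fails for general operator spaces (Kirchberg's example) but holds for operator systems --- is that one can then push the range back into $\cl S$ itself; this is exactly the local reflexivity of $\cl S$, and getting the order of operations right (first land inside $\cl S$, then convexify to upgrade weak convergence to norm convergence) is the delicate bookkeeping. An alternative that stays at the level of tensor products, avoiding any explicit appeal to local reflexivity, is to show directly that $z \in (E \otimes_{\min} \cl S)^+$ forces $({\rm id}_E \otimes \iota)(z) \in (E \otimes_{\max} \cl S^{**})^+$ for every finite-dimensional operator system $E$ --- using functoriality of $\otimes_{\min}$ and $\otimes_{\max}$, the semidiscrete factorization above, and the fact that the positive cone of $E \otimes_{\max} \cl S^{**}$ is weak$^*$-closed --- and then to pull positivity back along the complete order embedding $E \otimes_{\max} \cl S \hookrightarrow (E \otimes_{\max} \cl S)^{**} = E \otimes_{\max} \cl S^{**}$, concluding by Corollary~\ref{main2}; the price of this route is the verification that $\otimes_{\max}$ against a finite-dimensional operator system commutes with passing to the second dual.
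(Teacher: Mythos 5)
Your overall architecture coincides with the paper's: (i) $\Rightarrow$ (ii) is immediate from Corollary~\ref{main2}; (ii) $\Rightarrow$ (iii) is exactly the paper's chain of citations (Effros--Ozawa--Ruan for injectivity of the bidual as an operator space, the unital-complete-contraction upgrade to operator system injectivity, Choi--Effros to get a $C^*$-algebra, Sakai to get a von~Neumann algebra); and for (iii) $\Rightarrow$ (i) the paper simply cites Kirchberg's Lemma~2.8(ii), whereas you sketch the proof. The sketch, however, contains one genuine gap.

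The gap is the appeal to ``the local reflexivity of the operator system $\cl S$ (available here by the work cited above)'' in order to push the ranges of the maps $M_{n_\lambda} \to \cl S^{**}$ down into $\cl S$. Nothing cited makes local reflexivity available at that point: the Effros--Ozawa--Ruan theorem yields local reflexivity only for operator spaces already known to be nuclear, which is precisely what you are trying to prove, and it is false that operator systems are locally reflexive in general ($\cl B(\ell^2(\bb N))$ is a counterexample); indeed, local reflexivity of $\cl S$ appears in the paper as a \emph{corollary} of this very theorem, so invoking it here is circular. The step is nonetheless salvageable, and this is exactly where the operator system structure (as opposed to the operator space structure, cf.\ Kirchberg's example) does the work: because the domain is a matrix algebra, a completely positive map $M_n \to \cl S^{**}$ corresponds to a single element of $M_n(\cl S^{**})^+ = M_n(\cl S)^{**\,+}$, and this cone is the weak$^*$ bipolar, hence the weak$^*$ closure, of $M_n(\cl S)^+$; therefore every cp map $M_n \to \cl S^{**}$ is a point-weak$^*$ limit of cp maps $M_n \to \cl S$, and a small perturbation (as at the end of the proof of Theorem~\ref{main1}) restores unitality. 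With that lemma in place of ``local reflexivity,'' your reindexing and Mazur/convex-combination steps (including the observation that a finite direct sum of matrix algebras sits unitally inside a single matrix algebra with a ucp expectation onto it) are the standard and correct way to finish. Your alternative tensor-product route carries the analogous unproved ingredient, namely the identification $(E \otimes_{\max} \cl S)^{**} = E \otimes_{\max} \cl S^{**}$, which you rightly flag as its cost.
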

\begin{proof} Clearly, (i) implies (ii) by Theorem~\ref{main1}.

For (ii) $\Rightarrow$ (iii), combine \cite[Theorem~4.5]{EOR} and
\cite[Theorem~3.1]{CE1} and Sakai's theorem.

Finally, the proof that (iii) implies (i), is due to Kirchberg
\cite[Lemma~2.8(ii)]{Ki2}.
\end{proof}

Smith's characterization of nuclear $C^*$-algebras
\cite[Theorem~1.1]{Sm} follows from (ii) $\Rightarrow$ (i). We now
see another contrast between operator spaces and operator systems.

\begin{cor}
Let $\cl S$ be an operator system. If $\cl S^{**}$ is
unitally completely order isomorphic to an injective von~Neumann
algebra, then $\cl S$ is a locally reflexive operator space.
\end{cor}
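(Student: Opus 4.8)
The plan is to obtain local reflexivity of $\cl S$ as an operator space directly from the chain of implications already available: Theorem~\ref{main3} tells us that the hypothesis forces $\cl S$ to be a nuclear operator space, and then Kirchberg's theorem on nuclearity versus local reflexivity finishes the argument. So the first step is to invoke Theorem~\ref{main3}: since $\cl S^{**}$ is unitally completely order isomorphic to an injective von~Neumann algebra, condition (iii) of that theorem holds, hence condition (ii) holds, i.e. $\cl S$ is a nuclear operator space.

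The second step is to pass from nuclearity of the operator space $\cl S$ to local reflexivity. Here one appeals to the theorem of Effros, Ozawa and Ruan \cite[Theorem~4.5]{EOR}, which characterizes local reflexivity of an operator space $X$ in terms of a factorization/approximation property; in particular any nuclear operator space is locally reflexive. (Alternatively, one can cite Kirchberg's original observation that nuclear operator spaces are automatically locally reflexive.) Combining the two steps gives the conclusion.

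The main point worth spelling out — and the only place where any care is needed — is that the notion of nuclear operator space used in Theorem~\ref{main3} and the one used in the Effros–Ozawa–Ruan characterization agree; this is just the definition of nuclear operator space recalled immediately before Theorem~\ref{main3}, so no real obstacle arises. The slight subtlety I would flag is that ``nuclear'' for an operator space is phrased with \emph{complete contractions} $\varphi_\lambda$ and $\psi_\lambda$ (rather than unital completely positive maps), so the implication (i)$\Rightarrow$(ii) in Theorem~\ref{main3} already absorbed the necessary conversion, and from that point on the argument is purely about operator spaces. Thus the proof is essentially a two-line concatenation of Theorem~\ref{main3} with \cite[Theorem~4.5]{EOR}, with the remark above that Kirchberg's operator space $X$ shows the analogous statement fails for general operator spaces, highlighting the contrast the corollary is meant to illustrate.
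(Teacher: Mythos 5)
Your argument is correct and is essentially identical to the paper's proof: apply Theorem~\ref{main3} (iii)$\Rightarrow$(ii) to get that $\cl S$ is a nuclear operator space, then invoke Effros--Ozawa--Ruan to conclude local reflexivity. The only discrepancy is the citation: the paper uses \cite[Theorem~4.4]{EOR} for ``nuclear implies locally reflexive,'' whereas you cite Theorem~4.5, which the paper instead uses for the contrasting Kirchberg example.
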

\begin{proof} By the above result, $\cl S$ is a nuclear operator space and hence by
\cite[Theorem~4.4]{EOR}, $\cl S$ is locally reflexive.
\end{proof}

\begin{cor}
Every finite dimensional nuclear operator system is unitally
completely order isomorphic to the direct sum of matrix algebras.
\end{cor}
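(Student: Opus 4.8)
The plan is to deduce this immediately from Theorem~\ref{main3}. Let $\cl S$ be a finite dimensional nuclear operator system. The key observation is that in finite dimensions the canonical embedding $\cl S \to \cl S^{**}$ is a unital complete order isomorphism onto $\cl S^{**}$: every matrix cone in sight is finite dimensional, hence closed; the Archimedean order unit of $\cl S^{**}$ is the image of $1_{\cl S}$; and a routine bipolar argument identifies $M_n(\cl S^{**})^+$ with $M_n(\cl S)^+$ for every $n$. Thus $\cl S$ and $\cl S^{**}$ are unitally completely order isomorphic.

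By the implication (i)$\Rightarrow$(iii) of Theorem~\ref{main3}, nuclearity of $\cl S$ gives that $\cl S^{**}$ --- and hence $\cl S$ --- is unitally completely order isomorphic to an injective von~Neumann algebra $M$. Since $\cl S$ is finite dimensional, so is $M$; thus $M$ is a finite dimensional $C^*$-algebra, and by the Artin--Wedderburn structure theorem $M$ is $*$-isomorphic to $\bigoplus_{i=1}^{k} M_{n_i}$ for suitable $n_1, \dots, n_k \in \bb N$. A unital $*$-isomorphism of unital $C^*$-algebras is in particular a unital complete order isomorphism, so composing the maps produces a unital complete order isomorphism $\cl S \cong \bigoplus_{i=1}^{k} M_{n_i}$, which is the assertion.

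There is essentially no obstacle here beyond bookkeeping: the substantive content is entirely packaged in Theorem~\ref{main3}, and the only thing one must verify by hand is the finite dimensional reflexivity statement $\cl S \cong \cl S^{**}$ for operator systems (which could equally well just be cited). If one prefers to avoid the bidual altogether, one can instead start from Corollary~\ref{main2}(iii) and note that for finite dimensional $\cl S$ the composition $\psi_\lambda \circ \varphi_\lambda$ is eventually invertible; but converting such an approximate factorization into an exact complete order embedding of $\cl S$ into a single $M_{n}$ needs a compactness argument on the relevant sets of maps, so the route through Theorem~\ref{main3} is the cleaner one.
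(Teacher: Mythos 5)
Your argument is correct and is essentially the paper's proof: both identify $\cl S$ with $\cl S^{**}$ in finite dimensions and then apply Theorem~\ref{main3} (i)$\Rightarrow$(iii) together with the structure theorem for finite dimensional $C^*$-algebras. The extra care you take with the reflexivity step $\cl S \cong \cl S^{**}$ is fine but not a different route.
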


\begin{proof} Let $\cl S$ be a finite dimensional operator
  system. Then $\cl S = \cl S^{**},$ which by the above result is
  unitally completely order isomorphic to a finite dimensional
  $C^*$-algebra.
\end{proof}

\begin{rem} Kirchberg \cite[Theorem~1.1]{Ki2} proves that every nuclear
separable operator system is unitally completely isometric to a
quotient of the CAR-algebra by a hereditary C*-subalgebra and that
conversely, every such quotient gives rise to a nuclear separable
operator system.
\end{rem}

\section{A Nuclear Operator system that is not a $C^*$-algebra}

Kirchberg and Wassermann\cite{KW} constructed a remarkable example of a nuclear
operator system that has no unital complete order embedding into any
nuclear $C^*$-algebra. So, in particular, they give an example of a
nuclear operator system that is not unitally completely order
isomorphic to a $C^*$-algebra. In this section we provide a very
concrete example of this latter phenomena.

Let $\cl K_0 \subseteq \cl B(\ell^2(\bb N))$ denote the norm
closed linear span of  $\{ E_{i,j}: (i,j) \ne (1,1) \},$ where
$E_{i,j}$ are the standard matrix units and let $$\cl S_0= \{
\lambda I + K_0 : \lambda \in \bb C, K_0 \in \cl K_0 \}\subseteq
\cl B(\ell^2(\bb N))$$ denote the operator system spanned by $\cl
K_0$ and the identity operator. The goals of this section are to
show that $\cl S_0$ is a nuclear operator system that it is not
unitally completely order isomorphic to any $C^*$-algebra and that
$\cl S_0^{**}$ is unitally completely order isomorphic to $\cl
B(\ell^2(\bb N)).$

Let $V_n: \bb C^n \to \ell^2(\bb N)$ be the isometric inclusion
defined by $V_n(e_j) =e_j, 1 \le j \le n$ and let $Q_n \in \cl
B(\ell^2(\bb N))$ be the projection onto the orthocomplement of
$V_n(\bb C^n).$ Finally, define unital completely positive maps,
$\varphi_n : \cl B(\ell^2(\bb N)) \to M_n$ and $\psi_n:M_n \to \cl
B(\ell^2(\bb N))$ by $$\varphi_n(X) = V_n^*XV_n \qquad \text{and}
\qquad \psi_n(Y) = V_nYV_n^* + y_{1,1}Q_n,\quad Y=(y_{i,j}).$$

\begin{prop} The following hold:
\begin{enumerate}
\item[(i)] $\psi_n(M_n) \subseteq \cl S_0;$ \item[(ii)] for any $m
\in \bb N$ and $(X_{i,j}) \in M_m(\cl S_0),$ $\|(X_{i,j}) -
(\psi_n \circ \varphi_n(X_{i,j})) \| \to 0$ as $n \to +\infty;$
\item[(iii)] $\cl S_0$ is a nuclear operator system.
\end{enumerate}
\end{prop}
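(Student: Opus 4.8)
The plan is to establish (i) and (ii) by direct computation with the matrix units and then to read off (iii) from Corollary~\ref{main2}, using the net indexed by $n\in\bb N$.

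For (i), I would expand $\psi_n(Y)$ using $Q_n=I-V_nV_n^*$ together with $P_n:=V_nV_n^*=\sum_{j=1}^n E_{j,j}$, getting $\psi_n(Y)=V_nYV_n^*+y_{1,1}I-y_{1,1}\sum_{j=1}^n E_{j,j}$. Since $V_nYV_n^*=\sum_{i,j=1}^n y_{i,j}E_{i,j}$ has $(1,1)$-entry equal to $y_{1,1}$, the two $y_{1,1}E_{1,1}$ contributions cancel, and what remains is $\psi_n(Y)=y_{1,1}I+K$ where $K$ is a finite linear combination of matrix units $E_{i,j}$ with $(i,j)\ne(1,1)$. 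Hence $K\in\cl K_0$ and $\psi_n(Y)\in\cl S_0$.

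For (ii), the key point is that on $\cl S_0$ the composition $\psi_n\circ\varphi_n$ simplifies. Writing a generic element of $\cl S_0$ as $X=\lambda I+K_0$ with $K_0\in\cl K_0$ — so that $K_0$ is compact and $\langle K_0e_1,e_1\rangle=0$, both because $\cl K_0$ is the norm-closed span of the matrix units $E_{i,j}$ with $(i,j)\ne(1,1)$ — one gets $\varphi_n(X)=\lambda I_n+V_n^*K_0V_n$, whose $(1,1)$-entry is $\lambda$. Feeding this into $\psi_n$ and using $P_n+Q_n=I$ yields $\psi_n\circ\varphi_n(X)=\lambda I+P_nK_0P_n$, so that $X-\psi_n\circ\varphi_n(X)=K_0-P_nK_0P_n=Q_nK_0+P_nK_0Q_n$, which tends to $0$ in norm because $K_0$ is compact and $Q_n\to 0$ strongly. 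The matrix statement follows from the same identity applied entrywise: $(X_{i,j})-(\psi_n\circ\varphi_n(X_{i,j}))=(K_{i,j}-P_nK_{i,j}P_n)$, a matrix with finitely many entries each tending to $0$ in norm; equivalently one writes it as $(I_m\otimes Q_n)(K_{i,j})+(I_m\otimes P_n)(K_{i,j})(I_m\otimes Q_n)$ with $(K_{i,j})$ compact on $\bb C^m\otimes\ell^2(\bb N)$ and $I_m\otimes Q_n\to 0$ strongly.

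For (iii), I would note that $\varphi_n$ and $\psi_n$ are unital ($V_n^*V_n=I_n$ and $P_n+Q_n=I$) and completely positive ($\varphi_n$ is a compression, and $\psi_n$ is the sum of the compression $Y\mapsto V_nYV_n^*$ and the completely positive map $Y\mapsto y_{1,1}Q_n$, the latter being a state followed by a positive operator). By (i) we may regard $\psi_n$ as a unital completely positive map $M_n\to\cl S_0$, while restricting $\varphi_n$ gives a unital completely positive map $\cl S_0\to M_n$; by (ii) with $m=1$, $\psi_n\circ\varphi_n\to{\rm id}_{\cl S_0}$ in the point-norm topology. Thus condition (iii) of Corollary~\ref{main2} is met and $\cl S_0$ is nuclear. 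I do not expect a genuine obstacle: the whole argument rests on the two computations above, and the only thing requiring care is the bookkeeping of the $(1,1)$-entry — recognizing that the defect term $y_{1,1}Q_n$ in the definition of $\psi_n$ is exactly what forces the range into $\cl S_0$ and what cancels in $\psi_n\circ\varphi_n$ on $\cl S_0$ — together with the passage from strong convergence $Q_n\to 0$ to norm convergence, which is legitimate only because we are multiplying by compact operators.
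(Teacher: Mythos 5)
Your proof is correct and follows essentially the same route as the paper: the cancellation of the $(1,1)$-entry showing $\psi_n(Y)\in\cl S_0$, the identity $\psi_n\circ\varphi_n(\lambda I+K_0)=\lambda I+P_nK_0P_n$ together with compactness of $K_0$ (and of a matrix of compacts) to get point-norm convergence, and then the factorization criterion of Corollary~\ref{main2}. Your version merely spells out the computations that the paper leaves implicit.
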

\begin{proof} Given $Y \in M_n,$ we have that $\psi_n(Y - y_{1,1}I_n) \in \cl K_0,$ and hence $\psi_n(Y) \in \cl S_0$ and (i) follows.

If $X \in \cl K_0,$ then the first $n \times n$ matrix entries of
$\psi_n \circ \varphi_n(X)$ agree with those of $X$ and the
remaining entries are $0.$ Since $X$ is compact, $\|X - \psi_n
\circ \varphi_n(X) \| \to 0$ and since both maps are unital, we
have that (ii) holds for the case $m=1.$  The case $m >1$ follows
similarly.

Statement (iii) follows by (ii) and Theorem~\ref{main1}.
\end{proof}

\begin{thm} The nuclear operator system $\cl S_0$ is not unitally completely order isomorphic to a $C^*$-algebra.
\end{thm}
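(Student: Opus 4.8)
The plan is to argue by contradiction: suppose there were a unital complete order isomorphism $\Theta : \cl S_0 \to \cl A$ onto a unital $C^*$-algebra $\cl A$, and then derive a contradiction from the internal multiplicative structure that such an isomorphism would impose on $\cl S_0$. The key structural feature to exploit is that $\cl S_0$ contains many genuine projections inherited from $\cl B(\ell^2(\bb N))$ — for instance each $E_{j,j}$ with $j \ge 2$, and finite sums $E_{2,2} + \cdots + E_{k,k}$ — together with the off-diagonal partial isometries $E_{i,j}$ for $(i,j) \ne (1,1)$, but it crucially does \emph{not} contain $E_{1,1}$. Since a unital complete order isomorphism between operator systems carries the set of positive contractions that are "order-theoretic projections" (elements $p$ with $0 \le p \le 1$ and $0 \le 1-p \le 1$ that are extreme, equivalently those $p$ for which $p$ and $1-p$ generate a compression structure) to the corresponding set, and since in a $C^*$-algebra these order-extreme positive contractions are exactly the projections, $\Theta$ would have to match up the projections of $\cl S_0$ with projections of $\cl A$.

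The heart of the argument is then to show that the ``missing corner'' forces a contradiction. Concretely, I would look at the element $h = E_{1,2} + E_{2,1} \in \cl S_0$, which is self-adjoint with $-1 \le h \le 1$ in $\cl B(\ell^2)$, hence in $\cl S_0$. Its ``square'', computed in $\cl B(\ell^2)$, is $E_{1,1} + E_{2,2}$, which is \emph{not} in $\cl S_0$. But if $\cl S_0$ were completely order isomorphic to a $C^*$-algebra $\cl A$ via $\Theta$, one can recover the square of a self-adjoint element from the order structure alone: for a self-adjoint contraction $a$ in a unital $C^*$-algebra, $a^2$ is the unique element $b$ of the operator system generated by $a$ and $1$ with $0 \le b \le 1$, $b \le$ everything above $a^2$, characterized via the $2\times 2$ positivity condition $\begin{pmatrix} 1 & a \\ a & b\end{pmatrix} \ge 0$ being extremal, or equivalently as $\inf\{ t \cdot 1 + (2a s - s^2) : s,t \}$; more cleanly, $a^2$ is detected by the fact that $\begin{pmatrix} 1 & a \\ a & b\end{pmatrix} \in M_2(\cl A)^+$ for some $b$ and this forces $b \ge a^2$. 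Thus whether or not $h^2 \in \cl S_0$ is an order-theoretic question: the completely positive map condition on $\begin{pmatrix} 1 & h \\ h & ? \end{pmatrix}$ must have a smallest legal lower-right entry, and in $\cl B(\ell^2)$ that entry is $E_{1,1}+E_{2,2} \notin \cl S_0$, while any $C^*$-algebra model would need the analogous smallest entry $\Theta(h)^2$ to lie in $\cl A = \Theta(\cl S_0)$. Making this precise, one shows that the existence of $\Theta$ would force $E_{1,1}+E_{2,2}$, and then (by a similar move applied to $E_{1,j}+E_{j,1}$) each $E_{1,1}+E_{j,j}$, to lie in $\cl S_0$; combining two of these and subtracting gives a diagonal difference that isolates $E_{1,1}$, contradicting the definition of $\cl S_0$.

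The main obstacle I expect is justifying rigorously that ``$a^2$'' — or more precisely the least upper/greatest lower bound encoding it — is an invariant of the operator system structure and is transported correctly by a \emph{unital complete} order isomorphism, since squaring is not an order-theoretic operation in a single operator system that is not a $C^*$-algebra (indeed that is the whole point). The resolution is to phrase everything in $M_2$: for a self-adjoint contraction $a \in \cl S$ inside $\cl B(\cl H)$, consider the set $\{ b = b^* \in \cl S : \begin{pmatrix} 1 & a \\ a & b \end{pmatrix} \in M_2(\cl S)^+ \}$. A unital complete order isomorphism preserves this set and all its order-theoretic infima that happen to exist \emph{in} $\cl S$. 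One then checks: (1) in $\cl B(\ell^2)$ this set for $a = h$ has $E_{1,1}+E_{2,2}$ as its minimum, so the set has \emph{no} minimum inside $\cl S_0$ (the infimum is not attained in $\cl S_0$); whereas (2) in any unital $C^*$-algebra $\cl A$, the corresponding set for $\Theta(h)$ \emph{does} have the minimum $\Theta(h)^2 \in \cl A$. Since the existence-of-a-minimum property is clearly preserved by $\Theta$, this is the desired contradiction. An alternative, possibly cleaner route to the same end uses Proposition's fact that $\cl S_0^{**} \cong \cl B(\ell^2)$: if $\cl S_0$ were a $C^*$-algebra then $\cl S_0$ would be an operator subsystem of its own bidual that is also multiplicatively closed, forcing $E_{1,1} \in \cl S_0$ as a spectral projection of $\Theta^{-1}$ applied to a projection of $\cl A$; but since the bidual route still requires the same ``squaring is order-invariant under $\Theta$'' lemma, the $M_2$-positivity argument above is the essential ingredient either way.
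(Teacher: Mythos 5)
Your argument is correct in substance but takes a genuinely different route from the paper. The paper assumes a unital complete order isomorphism $\gamma:\cl A\to\cl S_0$, writes $\gamma(a)=P\pi(a)P$ via Stinespring, and uses norm estimates on rows and columns of $\pi(a_{i,j})$ to kill the off-diagonal corners, concluding that $\gamma$ is a $*$-homomorphism and hence that $\cl S_0$ would be multiplicatively closed — contradicting $E_{1,2}E_{2,1}=E_{1,1}\notin\cl S_0$. You instead exploit the same missing corner through a purely order-theoretic invariant: for a self-adjoint $a$ in a unital operator system $\cl S$, the set $S_a=\{\,b\in\cl S: \left(\begin{smallmatrix}1&a\\ a&b\end{smallmatrix}\right)\in M_2(\cl S)^+\}$ is transported by any unital $2$-order isomorphism, it has the least element $a^2$ when $\cl S$ is a $C^*$-algebra (Schur complement), and for $a=h=E_{1,2}+E_{2,1}$ it has no least element in $\cl S_0$. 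This is a nice trade: your argument needs only positivity at the $M_2$ level rather than the full Stinespring machinery, and it isolates exactly which order-theoretic datum distinguishes $\cl S_0$ from a $C^*$-algebra; the paper's dilation argument is more standard and generalizes to other "non-multiplicativity" situations. One step you flag but do not carry out deserves care: the fact that $\{b\in\cl B(\ell^2)^{sa}:b\ge h^2\}$ has minimum $h^2\notin\cl S_0$ does not by itself rule out a least element of the \emph{relative} set $S_h\cap\cl S_0$. To close this, use the decreasing family
\[ b_N \;=\; I-\sum_{n=3}^{N}E_{n,n}\;=\;E_{1,1}+E_{2,2}+\sum_{n>N}E_{n,n}\;\in\;S_h\cap\cl S_0 ; \]
any least element $b_0$ of $S_h\cap\cl S_0$ satisfies $E_{1,1}+E_{2,2}\le b_0\le b_N$ for all $N$, so $\langle b_0e_n,e_n\rangle=0$ for $n\ge 3$, whence $b_0$ is supported on ${\rm span}\{e_1,e_2\}$, hence compact, hence lies in $\cl K_0$ and has vanishing $(1,1)$ entry, contradicting $b_0\ge E_{1,1}+E_{2,2}$. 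With that inserted the proof is complete; the opening remarks about order-theoretic projections and the closing bidual "alternative" are not needed and should be dropped, as they rest on claims you have not justified.
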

\begin{proof}
Assume to the contrary that $\cl A$ is a unital $C^*$-algebra and
that $\gamma: \cl A \to \cl S_0$ is a unital, complete order
isomorphism. Then $\gamma$ is also a completely isometric
isomorphism. Use the Stinespring representation \cite[Theorem
4.1]{Pa} to write $\gamma(a) = P\pi(a)P,$ where $\pi:\cl A \to \cl
B(\ell^2(\bb N) \oplus \cl H)$ is a unital $*$-homomorphism and
$P: \ell^2(\bb N) \oplus \cl H \to \ell^2(\bb N)$ denotes the
orthogonal projection.

Let $a_{i,j}, (i,j) \ne (1,1)$ denote the unique elements of $\cl
A,$ satisfying $\gamma(a_{i,j}) = E_{i,j}.$  Relative to the
decomposition $\ell^2(\bb N) \oplus \cl H,$ we have that
\[ \pi(a_{i,j}) = \begin{pmatrix} E_{i,j} & B_{i,j}\\C_{i,j} &
  D_{i,j} \end{pmatrix}, \] where $B_{i,j}: \cl H \to
\ell^2(\bb N), C_{i,j}: \ell^2(\bb N) \to \cl H$ and $D_{i,j}: \cl H
\to \cl H$ are bounded operators.

By choosing an orthonormal basis $\{ u_t \}_{t \in T}$ we may
regard $B_{i,j}$ as an $\bb N \times T$ matrix and $C_{i,j}$ as a
$T \times \bb N$ matrix. Since $\|\pi(a_{i,j})\| = \|E_{i,j}\|
=1,$ we must have that the $i$-th row of $B_{i,j}$ is $0$ and the
$j$-th column of $C_{i,j}$ is $0.$

If $k \ne i,$ then
\[ 1= \|( E_{i,j}, E_{k,k+1})\| = \|(\pi(a_{i,j}), \pi(a_{k,k+1})) \| \ge
\|(E_{i,j}, B_{i,j}, E_{k,k+1}, B_{k,k+1})\| \]
from which it follows that the $k$-th row of $B_{i,j}$ is also $0.$
This proves that $B_{i,j} =0$ for all $(i,j) \ne (1,1).$


A similar argument using the fact that $\|\begin{pmatrix} E_{i,j} \\ E_{k+1,k} \end{pmatrix} \| =1$ for $k \ne j$ yields that $C_{i,j} =0$ for all $(i,j) \ne (1,1).$

Since $\cl A$ is the closed linear span of $a_{i,j}, (i,j) \ne (1,1)$ and the identity it follows that for any $a \in \cl A,$
\[ \pi(a) = \begin{pmatrix} \gamma(a) & 0 \\ 0 & \rho(a) \end{pmatrix}, \]
for some linear map $\rho: \cl A \to \cl B(\cl H).$

But since $\pi$ is a unital $*$-homomorphism, it follows that
$\gamma: \cl A \to \cl B(\ell^2(\bb N))$ is a unital
$*$-homomorphism and, consequently, that $\cl S_0$ is a
$C^*$-subalgebra of $\cl B(\ell^2(\bb N)).$ But $E_{1,2}, E_{2,1}
\in \cl S_0,$ while $E_{1,1} = E_{1,2}E_{2,1} \notin \cl S_0.$
This contradiction completes the proof.
\end{proof}

By Theorem \ref{main3}, we know that $\cl S_0^{**}$ is an
injective von~Neumann algebra, so it is interesting to identify
the precise algebra.

\begin{thm} $\cl S_0^{**}$ is unitally completely order isomorphic to $\cl B(\ell^2(\bb N)).$
\end{thm}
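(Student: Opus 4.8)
The plan is to show that $\cl S_0^{**}$ is unitally completely order isomorphic to $\cl B(\ell^2(\bb N))$ by playing off the two completely positive maps $\varphi_n, \psi_n$ constructed above against each other. First I would pass to second duals: the maps $\varphi_n : \cl B(\ell^2(\bb N)) \to M_n$ and $\psi_n : M_n \to \cl S_0$ induce normal unital completely positive maps $\varphi_n^{**} : \cl B(\ell^2(\bb N))^{**} \to M_n$ and $\psi_n^{**} : M_n \to \cl S_0^{**}$ (the matrix algebras are reflexive, so $M_n^{**}=M_n$), and the inclusion $\cl S_0 \subseteq \cl B(\ell^2(\bb N))$ induces a unital complete order injection $\cl S_0^{**} \hookrightarrow \cl B(\ell^2(\bb N))^{**}$, which I will use to regard $\cl S_0^{**}$ as a sub-operator-system of $\cl B(\ell^2(\bb N))^{**}$. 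Composing with the canonical normal conditional expectation-type projection $\cl B(\ell^2(\bb N))^{**} \to \cl B(\ell^2(\bb N))$ onto the atomic part, one gets a unital completely positive map $\Psi_n : \cl S_0^{**} \to \cl B(\ell^2(\bb N))$ refining $\psi_n \circ \varphi_n|_{\cl S_0}$; more cleanly, restrict the normal extension $\varphi_n^{**}$ to $\cl S_0^{**}$ and then apply $\psi_n$.

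The key step is to upgrade the point-norm convergence $\psi_n\circ\varphi_n \to \mathrm{id}$ on $\cl S_0$ (established in part (ii) of the Proposition, at all matrix levels) to a statement about $\cl S_0^{**}$. Here I would argue as follows. The composites $\psi_n \circ \varphi_n : \cl B(\ell^2(\bb N)) \to \cl B(\ell^2(\bb N))$ are explicit: $(\psi_n\circ\varphi_n)(X)$ has $(i,j)$ entry $x_{i,j}$ for $i,j \le n$ with $(i,j)\neq(1,1)$, entry $x_{1,1}$ in the $(1,1)$ slot and repeated down the diagonal on $Q_n$, and $0$ elsewhere. These maps converge to $\mathrm{id}_{\cl B(\ell^2(\bb N))}$ in the point-weak$^*$ topology (indeed in the point-strong$^*$ topology on bounded sets), because $Q_n \to 0$ strongly. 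Passing to second duals and using that $\cl B(\ell^2(\bb N))^{**} = \cl B(\ell^2(\bb N)) \oplus (\text{non-atomic part})$ with the atomic part unitally completely order isomorphic to $\cl B(\ell^2(\bb N))$, I would show that the net $\psi_n^{**}\circ\varphi_n^{**}$, restricted to the weak$^*$-dense copy of $\cl S_0$ inside $\cl S_0^{**}$ and then extended by a weak$^*$-limit argument, produces a unital completely positive map $\Theta : \cl S_0^{**} \to \cl B(\ell^2(\bb N))$. Conversely the inclusion $\cl S_0 \subseteq \cl B(\ell^2(\bb N))$ dualises to a unital completely positive map $\cl B(\ell^2(\bb N)) = \cl B(\ell^2(\bb N))^{**}_{\mathrm{atomic}} \to \cl S_0^{**}$ only after I check that $\cl B(\ell^2(\bb N)) \subseteq \cl S_0^{**}$ completely order isomorphically onto its image — but this follows because $\cl S_0$ is weak$^*$-dense in $\cl B(\ell^2(\bb N))$ with the relative ultraweak topology, so $\cl S_0^{**} \to \cl B(\ell^2(\bb N))^{**} \to \cl B(\ell^2(\bb N))$ has a section. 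Composing the two maps in both orders and checking they are the respective identities (on the dense subsystem, hence everywhere by normality) gives the complete order isomorphism.

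A cleaner route, which I would actually adopt, is: by Theorem~\ref{main3}, $\cl S_0^{**}$ is unitally completely order isomorphic to an injective von Neumann algebra $\cl M$. Now $\cl S_0$ sits completely order isomorphically and weak$^*$-densely in $\cl B(\ell^2(\bb N))$; hence the canonical weak$^*$-continuous surjection $\cl S_0^{**} \to \cl B(\ell^2(\bb N))$ (dual to the inclusion $\cl S_0 \hookrightarrow \cl B(\ell^2(\bb N)) = \cl S_0^{**}$-predual pairing) is a unital complete quotient map, and I claim it is injective, so $\cl M \cong \cl B(\ell^2(\bb N))$. Injectivity: if $z \in \cl S_0^{**}$ maps to $0$, write $z$ as a weak$^*$-limit of a bounded net $s_\mu \in \cl S_0$; then $s_\mu \to 0$ weak$^*$ in $\cl B(\ell^2(\bb N))$, and I must show $s_\mu \to 0$ weak$^*$ in $\cl S_0^{**}$, i.e. the two weak$^*$ topologies agree on bounded subsets of $\cl S_0$ — equivalently, every bounded functional on $\cl S_0$ extends to a weak$^*$-continuous (normal) functional on $\cl B(\ell^2(\bb N))$, which holds because $\cl S_0 \ni \lambda I + K_0$ and every functional decomposes along the predual $\cl S_0^* = (\mathbb C \oplus \cl K_0)^* = \mathbb C \oplus \cl K_0^* = \mathbb C \oplus \cl B(\ell^2(\bb N))_*$-type pieces, all of which are given by trace-class operators.

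The main obstacle I anticipate is precisely this last point: verifying that the predual of $\cl S_0^{**}$ is exactly the trace-class operators (so that the two weak$^*$ topologies coincide on bounded sets), and checking that the resulting algebraic isomorphism $\cl S_0^{**} \to \cl B(\ell^2(\bb N))$ is genuinely a \emph{complete} order isomorphism and not merely an order isomorphism. For the completeness I would invoke that both sides are von Neumann algebras (the left by Theorem~\ref{main3}) and that a weak$^*$-continuous unital bijective $*$-preserving order isomorphism between von Neumann algebras that restricts to the complete order isomorphism $\cl S_0 \to \cl S_0 \subseteq \cl B(\ell^2(\bb N))$ on a weak$^*$-dense subsystem is automatically completely order preserving, by normality and the Kaplansky-density-free approximation available here (every positive element of $M_k(\cl B(\ell^2(\bb N)))$ is a bounded weak$^*$-limit from $M_k(\cl S_0)$, since $\cl S_0$ is weak$^*$-dense).
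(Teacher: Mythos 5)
Your ``cleaner route'' has essentially the right skeleton and is in fact close to what the paper does: the paper also identifies $\cl S_0^*$ with (a copy of) the trace class operators via restriction of normal functionals and then reads off $\cl S_0^{**}$ by duality. Your Banach-space-level work is correct: every bounded functional $(\beta,T_0)$ on $\cl S_0$ is the restriction of the normal functional given by $T=T_0+(\beta-\mathrm{tr}(T_0))E_{1,1}$, so the restriction map $\cl T(\ell^2(\bb N))\to\cl S_0^*$ is a linear bijection and its adjoint $\pi:\cl S_0^{**}\to\cl B(\ell^2(\bb N))$, $(\mu,X_0)\mapsto \mu I+X_0$, is a unital, weak$^*$-homeomorphic linear isomorphism. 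Your injectivity argument is fine.

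The genuine gap is that you never establish that $\pi$ and $\pi^{-1}$ are positive, let alone completely positive: you assert that $\pi$ is a ``complete quotient map'' without justification and then list exactly this point as ``the main obstacle I anticipate.'' That obstacle is the entire content of the paper's proof. Concretely, positivity of $\pi^{-1}$ amounts to showing that if $\mu I+X_0\ge 0$ then $\mu\beta+\sum_{(i,j)\ne(1,1)}x_{i,j}t_{i,j}\ge 0$ for every positive functional $(\beta,T_0)$ on $\cl S_0$, and this requires first characterizing those positive functionals. The paper does this in three steps that are absent from your proposal: (a) for $\cl S=\bb C I+\cl K(\ell^2(\bb N))$, a functional $(\beta,T)$ is positive iff $T\ge 0$ and $\beta\ge\mathrm{tr}(T)$, the excess $\beta-\mathrm{tr}(T)$ being a singular state ``at infinity,'' so positive functionals on $\cl S$ are \emph{not} all normal; (b) Krein extension from $\cl S_0$ to $\cl S$, giving that $(\beta,T_0)\ge 0$ iff $T_0+\alpha E_{1,1}\ge 0$ and $\beta\ge\mathrm{tr}(T_0)+\alpha$ for some $\alpha$; and (c) the trace computation $\mu\beta+\mathrm{tr}(X_0^tT)=\mu(\beta-\mathrm{tr}(T))+\mathrm{tr}((\mu I+X_0)^tT)\ge 0$, which uses $\mu\ge 0$. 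Your closing device for upgrading an order isomorphism to a complete one (bounded weak$^*$ approximation of positives in $M_k(\cl B(\ell^2(\bb N)))$ by positives in $M_k(\cl S_0)$, which can indeed be done with $\psi_n^{(k)}\circ\varphi_n^{(k)}$) is reasonable, and the paper itself only writes out the order-isomorphism level; but as submitted, the order-theoretic heart of the argument is missing.
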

\begin{proof} We only prove that $\cl S_0^{**}$ is unitally order
  isomorphic to $\cl B(\ell^2(\bb N)).$ To this end, let $\cl S = \{
  \lambda I + K: \lambda \in \bb C, K \in \cl K(\ell^2(\bb N)) \},$ denote the
  unital $C^*$-algebra spanned by the compact operators $\cl
  K(\ell^2(\bb N))$ and the identity. Thus, $\cl S_0 \subseteq \cl S$
  is a codimension 1 subspace.

As vector spaces, we have that $\cl S = \bb C \oplus \cl
K(\ell^2(\bb N)),$ so that $\cl S^* = \bb C \oplus \cl
T(\ell^2(\bb N)),$ where this latter space denotes the trace class
operators.

We let $\delta_{i,j}: \cl K(\ell^2(\bb N)) \to \bb C$ denote the
linear functional satisfying
\[\delta_{i,j}(E_{k,l}) = \begin{cases} 1& i=k, j=l\\
0& \text{ otherwise} \end{cases} \] so that every element of $\cl
K(\ell^2(\bb N))^*$ is of the form $\sum_{i,j} t_{i,j} \delta_{i,j}$
for some trace class matrix $T= (t_{i,j}).$ We identify $\cl S^* =
\bb C \oplus \cl T(\ell^2(\bb N))$ where
\[ \langle (\beta, T), \lambda I + K \rangle = \beta \lambda +
\sum_{i,j} t_{i,j} k_{i,j} = \beta \lambda + {\rm tr}(T^tK) \]
with $K=(k_{i,j})$.

The functional $(\beta, T)$ is positive if and only if $T$ is a
positive operator and $\beta \ge {\rm tr}(T).$ If $(\beta, T)$ is
a positive functional on $\cl S$, then we have
$$0 \le \langle (\beta,T), K \rangle = {\rm tr}(T^t K) \quad \text{and}
\quad 0 \le \langle (\beta,T), I-I_n \rangle = \beta - {\rm
tr}(T^t I_n)$$ for all positive compact operators $K$ and $n \in
\bb N$. Let $\lambda I + K$ be a positive operator. Since $K$ is
compact, we have $\lambda \ge 0$. The converse follows from
$$\langle (\beta, T), \lambda I + K \rangle = \beta \lambda + {\rm
tr}(T^t K) \ge {\rm tr}(T^t (\lambda I +K)) \ge 0.$$

Identify $\cl S_0^*$ with $\bb C \oplus \cl T_0$ where $\cl T_0$
denotes the trace class operators $T_0=(t_{i,j})$ with $t_{1,1}
=0.$ Since every positive functional on $\cl S_0$ extends to a
positive functional on $\cl S$ by the Krein theorem, we have that
$(\beta, T_0)$ defines a positive functional if and only if there
exists $\alpha \in \bb C,$ such that $T= T_0 + \alpha E_{1,1}$ is
positive and $\beta \ge {\rm tr}(T_0) + \alpha.$ That is if and
only if $\beta \ge {\rm tr}(T),$ where $T$ is some positive trace
class operator equal to $T_0$ modulo the span of $E_{1,1}.$

In a similar fashion we may identify $\cl S_0^{**}$ as the vector space
 $\bb C \oplus \cl B_0$, where $X_0=(x_{i,j}) \in \cl B_0$ if
and only if $X_0$ is bounded and $x_{1,1} =0.$ Moreover, $(\mu, X_0)$ will
define a positive element of $\cl S_0^{**}$ if and only if
\[ \mu \beta + \sum_{(i,j) \ne (1,1)} x_{i,j} t_{i,j} \ge 0,\]
for every positive linear functional $(\beta, T_0).$

We claim that $(\mu, X_0)$ is positive if and only if $\mu I +X_0
\in \cl B(\ell^2(\bb N))$ is a positive operator. This will show
that the bijection $$(\mu,X_0) \in \cl S_0^{**} \mapsto \mu I +X_0
\in \cl B(\ell^2(\bb N))$$ is an order isomorphism. Also, note
that the identity of $\cl S_0^{**}$ is $(1,0),$ so that this map
is unital.

To see the claim, first let $(\mu, X_0) \in \cl S_0^{**}$ be
positive. Given any $T= T_0 + \alpha E_{1,1}$ a positive trace
class operator, let $\beta = \alpha + {\rm tr}(T_0) = {\rm
tr}(T).$ Then $(\beta, T_0)$ is positive in $\cl S_0^*$ and, hence
\[ 0 \le \mu \beta + \sum_{(i,j) \ne (1,1)} x_{i,j} t_{i,j} =
\mu {\rm tr}(T) + {\rm tr}(X_0^t T) = {\rm tr}((\mu I+ X_0)^t T).
\] Since $T$ was an arbitrary trace class operator, this shows
that $\mu I +X_0$ is a positive operator in $\cl B(\ell^2(\bb
N)).$

Conversely, if $\mu I + X_0$ is a positive operator, then for any
positive $(\beta, T_0) \in \cl S_0^*$, pick $\alpha$ as above and
set $T = \alpha E_{1,1} + T_0$. We have that
\[ \mu \beta + \sum_{(i,j) \ne (1,1)} x_{i,j} t_{i,j} \ge \mu {\rm tr}(T) + {\rm tr}(X_0^t T) = {\rm tr}((\mu I + X_0)^t T) \ge 0, \]
since both operators are positive.

This completes the proof of the claim and of the theorem.
\end{proof}


\end{document}